\documentclass{amsart}
\usepackage{amsmath, amscd, amssymb, amsthm}
\usepackage{bbm}
\usepackage{latexsym}
\usepackage{amsfonts}
\usepackage{graphicx}
\usepackage[all,cmtip]{xy}

\usepackage[
colorlinks=true, linkcolor=black,breaklinks=true,
urlcolor=blue, citecolor=black]{hyperref}%
\setcounter{MaxMatrixCols}{30}
\usepackage{geometry}
\geometry{left=2.5cm,right=2.5cm,top=2.8cm,bottom=2.5cm}

\newtheorem{theorem}{Theorem}
\newtheorem{lemma}{Lemma}

\newtheorem{conjecture}{Conjecture}

\newcommand{\be}{\begin{equation}}
\newcommand{\ee}{\end{equation}}
\newcommand{\bea}{\begin{eqnarray}}
\newcommand{\eea}{\end{eqnarray}}

\def\XXint#1#2#3{{\setbox0=\hbox{$#1{#2#3}{\int}$ }
\vcenter{\hbox{$#2#3$ }}\kern-.6\wd0}}

\begin{document}

\title[Pluriclosed manifolds with constant $H$]{Pluriclosed manifolds with constant holomorphic sectional curvature}


\author{Peipei Rao}
\address{Peipei Rao. School of Mathematical Sciences, Chongqing Normal University, Chongqing 401331, China}
\email{{1291012563@qq.com}}

\author{Fangyang Zheng} \thanks{The research is partially supported by NSFC grant \# 12071050 and Chongqing Normal University.}
\address{Fangyang Zheng. School of Mathematical Sciences, Chongqing Normal University, Chongqing 401331, China}
\email{{20190045@cqnu.edu.cn}}

\subjclass[2010]{53C55 (primary), 53C05 (secondary)}
\keywords{pluriclosed manifold; Hermitian manifold; Strominger connection; holomorphic sectional curvature}

\begin{abstract}
A long-standing conjecture in complex geometry says that a compact Hermitian manifold with constant holomorphic sectional curvature must be K\"ahler when the constant is non-zero and must be Chern flat when the constant is zero. The conjecture is known in complex dimension $2$ by the work of Balas-Gauduchon in 1985 (when the constant is zero or negative) and by Apostolov-Davidov-Muskarov in 1996 (when the constant is positive). For higher dimensions, the conjecture is still largely unknown. In this article, we restrict ourselves to pluriclosed manifolds, and confirm the conjecture for the special case of Strominger K\"ahler-like manifolds, namely, for Hermitian manifolds whose Strominger connection (also known as Bismut connection) obeys all the K\"ahler symmetries.
 \end{abstract}

\maketitle


\markleft{Rao and Zheng}
\markright{Pluriclosed manifolds with constant $H$}

\section{Introduction and statement of result}

Let us denote by $R$ the curvature tensor of the Chern connection of a given Hermitian manifold $(M^n,g)$. The holomorphic sectional curvature $H$ is defined by
$$ H(X) = R_{X\overline{X}X\overline{X}} / |X|^4 $$
where $X\neq 0$ is any type $(1,0)$ tangent vector on $M$. When the metric $g$ is K\"ahler, it is well known that the values of $H$ determines the entire $R$, and complete K\"ahler manifolds with constant $H$ are exactly the complex space forms, namely, with universal covering space being ${\mathbb C}{\mathbb P}^n$, ${\mathbb C}^n$, or ${\mathbb C}{\mathbb H}^n$ equipped with (scaling of) the standard metric. When $g$ is non-K\"ahler, $R$ does not obey the usual symmetry conditions as in the K\"ahler case, and the values of $H$ do not determine the entire curvature tensor $R$. Nonetheless, the following long-standing folklore conjecture is still believed by many when the manifold is compact:


\begin{conjecture} \label{conjecture1}
Let $(M^n,g)$ be a compact Hermitian manifold with  $H$  equal to a constant $c$. Then $g$ is K\"ahler if $c\neq 0$ and $g$ is Chern flat (namely, $R=0$) if $c=0$.
\end{conjecture}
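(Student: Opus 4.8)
The plan is to turn the pointwise identity $H\equiv c$ into algebraic constraints on the Chern curvature $R$, feed these into the Bianchi identities of the Chern connection, and close the argument with an integral (Bochner-type) identity on the compact manifold $M$.

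The first step is polarization. Since $R_{X\overline{X}X\overline{X}}=c|X|^4$ for every nonzero $(1,0)$ vector $X$, substituting $X+Y$ and $X+\I Y$ and collecting the multilinear terms shows that the part of $R$ which is symmetric in its two holomorphic slots and in its two anti-holomorphic slots is completely determined: in a unitary frame,
$$ R_{i\bar j k\bar l}+R_{k\bar j i\bar l}+R_{i\bar l k\bar j}+R_{k\bar l i\bar j}=2c\,(\delta_{ij}\delta_{kl}+\delta_{il}\delta_{kj}). $$
This is the \emph{only} information that $H$ records directly. All remaining freedom in $R$ lies in the components antisymmetric under exchanging the two holomorphic (or the two anti-holomorphic) indices, and for a general Hermitian metric these are tied to the Chern torsion $T$: the first Bianchi identity of the Chern connection expresses the difference $R_{i\bar j k\bar l}-R_{k\bar j i\bar l}$ as a first covariant derivative of $T$ together with terms quadratic in $T$.

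With this dictionary I would substitute the polarized identity into the Bianchi relations so as to separate the K\"ahler model $\tfrac{c}{2}(g_{i\bar j}g_{k\bar l}+g_{i\bar l}g_{k\bar j})$ from the genuinely Hermitian corrections built from $T$, $\nabla T$ and their traces, and then integrate. On the compact $M$ one chooses a scalar---typically a contraction of the resulting pointwise equation, or equivalently the divergence of a one-form assembled from the torsion and the Ricci-type contractions of $R$---and applies Stokes' theorem to annihilate the exact terms. The goal is an identity of the schematic form
$$ \int_M \Big( |\nabla T|^2 + \langle R\ast T,\,T\rangle + c\,Q(T)\Big)\,dV = 0, $$
where $Q(T)$ is a quadratic form in the torsion arising from the $c$-terms. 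If this combination is sign-definite, then $c\neq 0$ forces $T\equiv 0$, so $g$ is K\"ahler and the classical complex-space-form rigidity pins down $R$; and in the case $c=0$ one aims to force $T\equiv 0$ as well, after which the vanishing of the symmetric part of $R$ together with the Bianchi relations yields $R\equiv 0$.

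I expect the decisive obstacle to be precisely the sign of the integrand above. For an arbitrary Hermitian metric neither the cross term $\langle R\ast T,T\rangle$ nor the quadratic term $Q(T)$ need have a favorable sign, and the antisymmetric curvature components left unconstrained by $H$ can conspire to defeat any single application of Stokes. This is exactly why Conjecture~\ref{conjecture1} remains open above complex dimension two: no choice of integrated scalar is known that closes the argument in full generality. Decisive progress should instead come from imposing structural hypotheses---pluriclosedness and the Strominger K\"ahler-like condition---under which the offending torsion couplings either vanish or recombine into a definite expression, making the Bochner argument conclusive.
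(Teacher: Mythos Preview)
The statement you are addressing is Conjecture~\ref{conjecture1}, which the paper presents as an \emph{open} conjecture; the paper does not claim to prove it in general. Your proposal is accordingly not a proof but an honest outline of a strategy, and you yourself flag the obstruction: the integrated Bochner-type identity has no known sign in general. So there is no gap to name in the usual sense---you are correctly describing why the problem is hard, and your polarization step and use of the first Bianchi identity match what the paper records in Lemma~1, Lemma~2, and Theorem~2.

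Where your proposal diverges from the paper is in the special case it \emph{does} settle, namely Strominger K\"ahler-like manifolds (Theorem~1). You predict that under SKL the torsion couplings in a Bochner integrand should become sign-definite so that the integral argument closes. The paper does not pursue that route at all. Instead, it exploits the local structure theory of non-K\"ahler SKL manifolds (Lemma~8): around any point there is an \emph{admissible} unitary frame in which $\eta_n=\lambda>0$ is constant, $T^n_{\ast\ast}=0$, $T^j_{in}=\delta_{ij}a_i$ with $\sum a_i=\lambda$, and $R^s_{i\bar j k\bar n}=0$. From Lemma~7 (which expresses $R^s$ as $\widehat{R}$ plus quadratic torsion terms) and $T^n_{\ast\ast}=0$ one reads off $0=R^s_{n\bar n n\bar n}=\widehat{R}_{n\bar n n\bar n}=c$, so $c=0$; then setting $\ell=n$ in Lemma~7 gives $0=(\bar a_k-\bar a_i)T^j_{ik}$, and taking $k=n$, $i=j$ forces $|a_i|^2=0$ for every $i$, contradicting $\sum a_i=\lambda>0$. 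This is a purely pointwise algebraic contradiction---no integration, no Stokes---and the paper explicitly remarks that compactness is not needed for Theorem~1. So the mechanism in the SKL case is quite different from the Bochner scheme you outline: rigidity comes from the parallel-torsion structure $\nabla^s T=0$ and the admissible-frame normal form, not from a global integral identity.
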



Note that compact Chern flat manifolds have been classified by Boothby \cite{Boothby} in 1958 as all the compact quotients of complex Lie groups, equipped with left invariant metrics.

 The conjecture is known to be true in dimension two. In 1985,  Balas and Gauduchon proved in \cite{BG} that when $c\leq 0$ the conjecture holds (see also \cite{Gauduchon1} and \cite{Balas} for earlier work). In 1996, Apostolov, Davidov and Muskarov \cite{ADM} solved the remaining $c>0$ case in dimension two.

 For dimension three or higher, the conjecture is still largely open, with only a few partial results known. Kai Tang in \cite{Tang} proved the conjecture under the additional assumption that the metric is Chern K\"ahler-like, meaning that the curvature tensor $R$ obeys all the K\"ahler symmetries. In their recent paper \cite{CCN}, Chen-Chen-Nie proved the conjecture for the case $c\leq 0$ under the additional assumption that $g$ is locally conformally K\"ahler. They also pointed out the necessity of the compactness assumption in the conjecture by explicit examples.

 For $n\geq 3$, Conjecture 1 seems to be a daunting task at this moment, and some people are actually hoping for counterexamples which would certainly form a very interesting class of non-K\"ahler manifolds if exist. For this reason, perhaps a less ambitious and more  realistic goal might be to restrict to some special classes of compact Hermitian manifolds. It seems to us that there are at least the following three ways to approach it.

 The first is consider {\em locally homogeneous Hermitian manifolds,} namely, a compact Hermitian manifold $(M^n,g)$ whose universal covering space is a homogeneous Hermitian manifold. A large and important subset of this is the so-called {\em Lie-Hermitian manifolds,} which means compact Hermitian manifold $(M^n,g)$ whose universal covering space is a (connected, simply-connected, even-dimensional) Lie group $G$ equipped with a left invariant complex structure and a compatible left invariant metric. In a recent work \cite{LZ}, Y. Li and the second named author confirmed the conjecture for complex nilmanifolds, namely, Lie-Hermitian manifolds with $G$ nilpotent. For reasons explained in \cite{LZ}, we believe that one should explore other classes of Lie-Hermitian manifolds, or more generally locally homogeneous Hermitian manifolds, in hope of either proving the conjecture in the special case or producing counterexamples.

 The second special class to consider would be the set of {\em balanced } manifolds, namely, Hermitian manifold $(M^n,g)$ with $d(\omega^{n-1})=0$. Here $\omega$ is the K\"ahler form. Such manifolds form an important subset of non-K\"ahler manifolds even for $n=3$. For instance, it includes all the twistor spaces and many known examples of non-K\"ahler Calabi-Yau spaces.

 In a recent work \cite{ZZ}, W. Zhou and the second named author proved the following statement: any compact Hermitian threefold with vanishing real bisectional curvature must be Chern flat. This is a special case of balanced threefolds. Recall that {\em real bisectional curvature} is a curvature notion on Hermitian manifolds introduced by X. Yang and the second named author in \cite{YangZ}. It is equivalent to $H$ in strength when the metric is K\"ahler, but slightly stronger than $H$ in the non-K\"ahler case, so the main result of \cite{ZZ} is weaker than the balanced case of Conjecture 1 for $n=3$ and $c=0$.

 The third special class for Conjecture 1 is to consider {\em pluriclosed} manifolds, namely, a Hermitian manifold $(M^n,g)$ such that $\partial \overline{\partial } \omega =0$. A well-known conjecture in complex geometry states that if a compact complex manifold admits a balanced Hermitian metric and a pluriclosed metric, then it is K\"ahlerian, namely, it admits a K\"ahler metric. So in this spirit the second and third special classes are mutually exclusive.

 The main purpose of this article is examine the conjecture for pluriclosed manifolds. In the next section, we will analyze the geometric properties of pluriclosed manifolds with constant holomorphic sectional curvature. The result is summarised as Theorem 2 there. While we cannot establish the conjecture for all pluriclosed manifolds at this point, we will prove the following special case of it:

 \begin{theorem}
 Let $(M^n,g)$ be a compact Hermitian manifold whose holomorphic sectional curvature $H$ is equal to a constant $c$. If $g$ is Strominger K\"ahler-like, then $g$ must be K\"ahler thus $(M^n,g)$ is a complex space form.
 \end{theorem}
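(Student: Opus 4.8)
The plan is to combine the structural description of compact pluriclosed manifolds with constant $H$ obtained in Theorem 2 with the rigidity of the Strominger (Bismut) K\"ahler-like condition and the classical polarization of the constant-$H$ hypothesis. It is known that every Strominger K\"ahler-like metric is automatically pluriclosed, so $(M^n,g)$ is a compact pluriclosed manifold with $H\equiv c$ and all the conclusions of Theorem 2 are at our disposal. On the Chern side, the two first Bianchi identities of the Chern connection read (up to the usual normalization) $R_{i\bar jk\bar l}-R_{k\bar ji\bar l}=(\nabla^c_{\bar j}T)^{l}_{ik}$ together with its conjugate, so that the failure of the Chern curvature to obey the K\"ahler symmetries is encoded entirely by the Chern torsion $T$ and one covariant derivative of it; meanwhile the constant-$H$ hypothesis, after the standard polarization, says precisely that the symmetrization of $R_{i\bar jk\bar l}$ over $\{i,k\}$ and over $\{\bar j,\bar l\}$ equals the space-form tensor $\tfrac c2(\delta_{ij}\delta_{kl}+\delta_{il}\delta_{kj})$.

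Next I would transfer this to the Strominger curvature $R^b$. Writing $\nabla^b=\nabla^c+\gamma$, the difference tensor $\gamma$ is linear in the torsion, and the identity $R^b=R+d^{\nabla^c}\gamma+\gamma\wedge\gamma$ expresses $R^b_{i\bar jk\bar l}$ as $R_{i\bar jk\bar l}$ plus terms linear in $\nabla^{c}T$ and terms quadratic in $T,\bar T$. Here the rigidity of the hypothesis enters: it is known that on a Strominger K\"ahler-like manifold the torsion is parallel with respect to the Strominger connection. Consequently $\nabla^{c}T=-\gamma\ast T$ is itself quadratic in $T$, and $|T|^2$ together with all its contractions are constant, so the relation above becomes $R^b_{i\bar jk\bar l}=R_{i\bar jk\bar l}+Q_{i\bar jk\bar l}(T,\bar T)$ for an explicit torsion-quadratic tensor $Q$. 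Since $g$ is Strominger K\"ahler-like, $R^b$ coincides with its own $\{i,k\}$- and $\{\bar j,\bar l\}$-symmetrization; symmetrizing the last identity and inserting the polarized constant-$H$ relation gives
\begin{gather*}
R^b_{i\bar jk\bar l}=\tfrac c2(\delta_{ij}\delta_{kl}+\delta_{il}\delta_{kj})+Q^{\mathrm{sym}}_{i\bar jk\bar l}(T,\bar T),\\
R_{i\bar jk\bar l}=\tfrac c2(\delta_{ij}\delta_{kl}+\delta_{il}\delta_{kj})+(Q^{\mathrm{sym}}-Q)_{i\bar jk\bar l}(T,\bar T),
\end{gather*}
so that, modulo the torsion, both connections carry constant-curvature tensors.

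It then remains to deduce $T\equiv0$. Feeding the two displayed identities back into the Chern and Strominger first Bianchi identities, and using the remaining K\"ahler-type symmetries of $R^b$ (in particular the coincidence of its various Ricci contractions that these symmetries force) together with the constraints supplied by Theorem 2, one reduces matters to a single tensorial identity on $M$ of the form $\Phi(T,\bar T)=c\cdot(\text{metric terms})$, with $\Phi$ a definite quadratic expression in the torsion assembled from the contractions occurring in $Q$. If $c\neq0$, comparing this identity with its traces forces every torsion component to vanish, so $g$ is K\"ahler, and a compact K\"ahler manifold with constant holomorphic sectional curvature is a complex space form. If $c=0$, the identity forces the torsion contractions entering $\Phi$ to vanish pointwise, and then the parallelism $\nabla^bT^b=0$, the constancy of $|T|^2$, the pluriclosed condition and compactness of $M$ (integrating the resulting Bochner-type identity, or applying the maximum principle to $|T|^2$) yield $T\equiv0$ as well, so $g$ is K\"ahler and flat. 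I expect this final step---untangling the torsion-quadratic identity to conclude that $T$ vanishes, especially in the $c=0$ case where the identity is homogeneous in $T$ and the global hypotheses (compactness, pluriclosedness, parallel torsion) are genuinely needed---to be the main obstacle, with the careful bookkeeping of the difference tensor $\gamma$ and of $Q$ in the passage to $R^b$ a secondary technical point.
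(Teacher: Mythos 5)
Your first two steps track the paper's argument closely: SKL implies pluriclosed and $\nabla^sT=0$, so the Chern derivative of the torsion becomes quadratic in $T$, and the difference $R^s-R$ reduces to an explicit torsion-quadratic tensor; symmetrizing and using $R^s=\widehat{R}^s$ then yields exactly the paper's Lemma 7, $R^s_{i\overline{j}k\overline{\ell}}=\widehat{R}_{i\overline{j}k\overline{\ell}}-\sum_r\big(T^j_{ir}\overline{T^k_{\ell r}}+T^{\ell}_{ir}\overline{T^k_{jr}}+T^j_{kr}\overline{T^i_{\ell r}}+T^{\ell}_{kr}\overline{T^i_{jr}}\big)$. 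Up to that point the proposal is sound.

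The gap is precisely the step you flag as "the main obstacle": you have no mechanism for extracting $T\equiv 0$ from this identity. The quadratic expression on the right is not sign-definite, so "comparing with its traces" does not force vanishing when $c\neq 0$, and in the $c=0$ case the integral identities of Theorem 2 (e.g.\ $2\int_M|\eta|^2=\int_M|T|^2$) and a hoped-for Bochner argument do not obviously close the loop either. The missing ingredient is the structure theorem for non-K\"ahler SKL manifolds from \cite{ZhaoZ1} and \cite{YZZ} (Lemma 8 of the paper): around every point there is an \emph{admissible} unitary frame in which $\eta=\lambda\varphi_n$ with $\lambda>0$ constant, $T^n_{\ast\ast}=0$, $T^j_{in}=\delta_{ij}a_i$ with constants $a_i$ satisfying $a_1+\cdots+a_{n-1}=\lambda$, and $R^s_{\ast\overline{\ast}\ast\overline{n}}=0$. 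Feeding this into the symmetrized identity at $i=j=k=\ell=n$ gives $0=R^s_{n\overline{n}n\overline{n}}=\widehat{R}_{n\overline{n}n\overline{n}}=c$, so a non-K\"ahler SKL metric with constant $H$ forces $c=0$ and $\widehat{R}=0$; then setting $\ell=n$ in the identity yields $(\overline{a}_k-\overline{a}_i)T^j_{ik}=0$, and taking $k=n$, $i=j$ gives $|a_i|^2=0$ for all $i$, contradicting $\sum_i a_i=\lambda>0$. This degenerate, essentially diagonal form of the torsion is what makes the quadratic identity rigid; without it your $\Phi$ is just an indefinite quadratic form and the argument stalls. Note also that this pointwise argument needs no compactness, Bochner identity, or maximum principle, contrary to what your sketch anticipates for the $c=0$ case.
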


Recall that a metric connection $D$ on a Hermitian manifold $(M^n,g)$ is said to be {\em K\"ahler-like,} if its curvature tensor $R^D$ obeys all the K\"ahler symmetries, namely, for any type $(1,0)$ complex tangent vectors $X$, $Y$, $Z$, $W$, the only possibly non-zero components of $R^D$ are $R^D_{X\overline{Y}Z\overline{W}}$, and
$\, R^D_{X\overline{Y}Z\overline{W}} = R^D_{Z\overline{Y}X\overline{W}}$.

This notion was introduced in \cite{YZ} for Levi-Civita connection and  Chern connection, following pioneer work of Gray and others, and it was introduced and studied for all metric connections by Angella, Otal, Ugarte and Villacampa in \cite{AOUV}.

Given a Hermitian manifold $(M^n,g)$, the Strominger connecton $\nabla^s$ (also known as Bismut connection) is the unique connection that is Hermitian (i.e., $\nabla^s g=0$, $\nabla^s J=0$) and its torsion tensor is totally skew-symmetric. This is an important canonical connection for Hermitian manifolds, which serves as a bridge between Levi-Civita connection and Chern connection. When $\nabla^s$ is K\"ahler-like, the metric is said to be {\em Strominger K\"ahler-like,} or $SKL$ for short.

It was conjectured in \cite{AOUV} that all compact SKL manifolds are pluriclosed. This was confirmed in \cite{ZhaoZ1} by Q. Zhao and the second named author. In \cite{YZZ}, more properties of SKL manifolds were analyzed and a classification theorem was established for such manifolds in dimension $3$.

Note that SKL manifolds includes all {\em Strominger flat} manifolds, which were classified by Q. Wang, B. Yang and the second named author in \cite{WYZ} as quotients of Samelson spaces, namely, Lie-Hermitian manifolds with bi-invariant metrics. More precisely, by Milnor's Lemma \cite{Milnor}, the universal cover is the product of a compact semi-simple Lie group with a vector group, equipped with a bi-invariant metric and a compatible left invariant complex structure.

Since any compact Chern flat manifold is always balanced, while SKL manifolds are pluriclosed, we know that in Theorem 1 the metric has to be K\"ahler in the $c=0$ case as well, by the well-known fact that any Hermitian metric that is simultaneously balanced and pluriclosed must be K\"ahler.

The article is organized as follows. In the next section, we will discuss general properties of pluriclosed manifolds with constant holomorphic sectional curvature, and summarize the results as Theorem 2. In the third section, we will specialize to SKL manifolds and prove Theorem 1.

\vspace{0.3cm}

\section{Pluriclosed manifolds with constant holomorphic sectional curvature }

First let us set up the notations.  Let $(M^n,g)$ be a Hermitian manifold, and let $J$ be the almost complex structure associated with the complex structure of $M$. Denote by $\nabla$ the Chern connection, which is the unique connection that is Hermitian (i.e., $\nabla g=0$ and $\nabla J=0$) and satisfies $(\nabla )^{0,1}=\overline{\partial}$. Denote by $T$, $R$ the torsion and curvature tensor of $\nabla$, namely,
$$ T(x,y) = \nabla_xy - \nabla_yx -[x,y], \ \ \ \ \ R_{xy}z=\nabla_x\nabla_y z - \nabla_y\nabla_x z -\nabla_{[x,y]} z$$
where $x$, $y$ and $z$ are tangent vectors on $M$. Write $g=\langle \, ,\, \rangle $ and $R_{xyzw}=\langle R_{xy}z, w\rangle$. Extend $g$, $T$, and $R$ linearly over ${\mathbb C}$. It is well-known that
$$T(X,\overline{Y})=0, \ \ \ \ R_{XY\ast \ast } = R_{\ast \ast ZW}=0$$
for any type $(1,0)$ complex tangent vectors $X$, $Y$, $Z$  and $W$, so the only possibly non-zero components of $R$ are $R_{X\overline{Y}Z\overline{W}}$. Suppose $\{ e_1, \ldots , e_n\}$ is a local unitary frame of type $(1,0)$ tangent vectors. Under the frame $e$, let us write
\begin{equation}
T(e_i,e_j) = \sum_{k=1}^n T^k_{ij} e_k
\end{equation}
where $T^k_{ij} = -T^k_{ji}$. Note that our $T^k_{ij}$ is twice of that in \cite{YZ}. Also write $R_{i\overline{j}k\overline{\ell}}$ for $R_{e_i\overline{e}_je_k\overline{e}_{\ell}}$.  By definition, the holomorphic sectional curvature $H$ of $R$ is equal to a constant $c$ if and only if
$$ R_{X\overline{X}X\overline{X}} = c |X|^4  $$
for any type $(1,0)$ tangent vector $X$. Under the unitary frame $e=\{ e_1, \ldots , e_n\}$, this means
\begin{equation}
 \widehat{R}_{i\overline{j}k\overline{\ell}}=\frac{c}{2}(\delta_{ij}\delta_{k\ell} + \delta_{i\ell}\delta_{kj}), \label{Rhat}
\end{equation}
where
\begin{equation}
\widehat{R}_{i\overline{j}k\overline{\ell}} = \frac{1}{4} \big( R_{i\overline{j}k\overline{\ell}} + R_{k\overline{j}i\overline{\ell}} +  R_{i\overline{\ell}k\overline{j}} +  R_{k\overline{\ell}i\overline{j}} \big)
\end{equation}
is the symmetrization of $R$. Note that when the metric $g$ is K\"ahler, the well-known K\"ahler symmetry says that $\widehat{R}=R$, so $H$ determines the entire $R$. However, for a general Hermitian metric, $\widehat{R}$ might not be equal to $R$, in which case $H$ does not determine $R$. This is where the difficulty lies in proving Conjecture 1.

Next, let $\{ \varphi_1, \ldots , \varphi_n\}$ be the unitary coframe dual to $e$, and denote by $\theta$, $\Theta$ the matrix of connection and curvature of the Chern connection under $e$. Namely,
$ \nabla e_i = \sum_{j=1}^n \theta_{ij}e_j$. The structure equations are
$$ d\varphi_i = - \sum_{j=1}^n \theta_{ji} \wedge \varphi_j + \tau_i , \ \ \ \ \ \Theta_{ij} = d\theta_{ij} - \sum_{r=1}^n \theta_{ir}\wedge \theta_{rj} $$
where
$$ \tau_i = \sum_{j<k} T^i_{jk} \,\varphi_j\wedge \varphi_k = \frac{1}{2} \sum_{j,k=1}^n T^i_{jk} \, \varphi_j\wedge \varphi_k $$
is the column vector of torsion forms, and
$$ \Theta_{ij} = \sum_{k,\ell=1}^n R_{k\overline{\ell}i\overline{j}}\,  \varphi_k \wedge \overline{\varphi}_{\ell}. $$
As a direct consequence of the structure equations, we have
\begin{equation*}
\partial \overline{\partial} \omega = - \sqrt{-1} \{ \,^t\!\varphi \wedge \Theta \wedge \overline{\varphi} + \,^t\!\tau \wedge \overline{\tau}\},
\end{equation*}
where $\omega = \sqrt{-1}\,^t\!\varphi \wedge \overline{\varphi}$ is the K\"ahler form, and $\varphi$, $\tau$ are understood as column vectors of $(1,0)$ or $(2,0)$ forms, respectively. Write them in components, we get

\begin{lemma}
A Hermitian manifold $(M^n,g)$ is pluricolsed if and only if under any local unitary frame $e$ it holds
\begin{equation*}
\big( R_{i\overline{j}k\overline{\ell}} + R_{k\overline{\ell}i\overline{j}}\big)  -  \big( R_{k\overline{j}i\overline{\ell}} + R_{i\overline{\ell}k\overline{j}}\big) = \sum_{r=1}^n T^r_{ik} \overline{T^r_{j\ell}}
\end{equation*}
for any $1\leq i,j,k,\ell \leq n$.
\end{lemma}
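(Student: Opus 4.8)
The statement is the componentwise form of the displayed identity for $\partial\overline\partial\omega$ preceding it, so the plan is simply to expand that identity in the unitary coframe. By definition $(M^n,g)$ is pluriclosed precisely when $\partial\overline\partial\omega=0$, and since the factor $-\sqrt{-1}$ is nonzero, the displayed formula shows this is equivalent to the vanishing of the $(2,2)$-form
\begin{equation*}
{}^t\!\varphi\wedge\Theta\wedge\overline\varphi + {}^t\!\tau\wedge\overline\tau .
\end{equation*}
First I would write ${}^t\!\varphi\wedge\Theta\wedge\overline\varphi=\sum_{i,j}\varphi_i\wedge\Theta_{ij}\wedge\overline\varphi_j$ and ${}^t\!\tau\wedge\overline\tau=\sum_i\tau_i\wedge\overline\tau_i$, and substitute the component expressions $\Theta_{ij}=\sum_{k,\ell}R_{k\overline\ell i\overline j}\,\varphi_k\wedge\overline\varphi_\ell$ and $\tau_i=\tfrac12\sum_{j,k}T^i_{jk}\,\varphi_j\wedge\varphi_k$ (with its conjugate $\overline\tau_i=\tfrac12\sum_{\ell,m}\overline{T^i_{\ell m}}\,\overline\varphi_\ell\wedge\overline\varphi_m$). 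This turns the pluriclosed condition into the vanishing of a single explicit $(2,2)$-form written in the coframe.

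Next I would read off coefficients against the basis $\{\varphi_i\wedge\varphi_k\wedge\overline\varphi_j\wedge\overline\varphi_\ell : i<k,\ j<\ell\}$ of $\Lambda^{2,2}$, whose members are linearly independent. For a fixed such quadruple, each unordered match of $\{i,k\}$ on the holomorphic side and of $\{j,\ell\}$ on the antiholomorphic side contributes a term, carrying a sign dictated by the antisymmetry of the wedge product. On the curvature side the four matches assemble, up to an overall sign, into the alternating combination $(R_{i\overline j k\overline\ell}+R_{k\overline\ell i\overline j})-(R_{k\overline j i\overline\ell}+R_{i\overline\ell k\overline j})$. On the torsion side the same four matches, after invoking $T^r_{jk}=-T^r_{kj}$ and $\overline{T^r_{\ell m}}=-\overline{T^r_{m\ell}}$, all yield the \emph{same} quantity $T^r_{ik}\,\overline{T^r_{j\ell}}$; the four equal contributions cancel the prefactor $\tfrac14$ and leave exactly $\sum_{r}T^r_{ik}\,\overline{T^r_{j\ell}}$. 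Equating the total coefficient to zero produces the asserted identity for $i<k$, $j<\ell$.

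To conclude I would observe that both sides of the identity are separately antisymmetric under $i\leftrightarrow k$ and under $j\leftrightarrow\ell$ — for the torsion side this is immediate from $T^r_{ik}=-T^r_{ki}$, and for the curvature side it is manifest by inspection of the four-term combination — and that both sides vanish when $i=k$ or $j=\ell$. Hence the identity over ordered indices propagates to all $1\le i,j,k,\ell\le n$. Since every step above is an equivalence, the ``if and only if'' is obtained simultaneously in both directions.

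The only genuine obstacle I anticipate is the sign and symmetrization bookkeeping: one must track the four orientation signs in each wedge match consistently and verify that the torsion contributions truly add rather than cancel, which is precisely what removes the factor $\tfrac14$ and matches the right-hand side. Everything else is routine substitution into the structure equations.
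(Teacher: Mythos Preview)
Your proposal is correct and is exactly the approach the paper takes: the paper derives the lemma simply by ``writing in components'' the identity $\partial\overline\partial\omega=-\sqrt{-1}\{\,{}^t\!\varphi\wedge\Theta\wedge\overline\varphi+{}^t\!\tau\wedge\overline\tau\,\}$, which is precisely the expansion and coefficient-reading you describe. Your sign and symmetrization bookkeeping is accurate, and your extension from ordered to all indices via antisymmetry is fine.
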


By the first Bianchi identity, we get the following identity between curvature $R$ and the covariant derivative of $T$ with respect to the Chern connection $\nabla$:

\begin{lemma}
For any Hermitian manifold $(M^n,g)$ and under any local unitary frame $e$, it holds
\begin{equation*}
 R_{k\overline{j}i\overline{\ell}} - R_{i\overline{j}k\overline{\ell}} =  T^{\ell}_{ik,\,\overline{j}}
\end{equation*}
for any $1\leq i,j,k,\ell \leq n$, where the index after comma stands for covariant derivative with respect to the Chern connection $\nabla$.
\end{lemma}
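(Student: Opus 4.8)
The plan is to obtain the identity directly from the first Bianchi identity for the Chern connection, which, since $\nabla$ carries torsion, takes the torsion-corrected form
\begin{equation*}
\mathfrak{S}\, R_{xy}z = \mathfrak{S}\,\big\{ (\nabla_x T)(y,z) + T(T(x,y),z)\big\},
\end{equation*}
where $\mathfrak{S}$ denotes the cyclic sum over the tangent vectors $x$, $y$, $z$. First I would specialize to $x=e_i$, $y=e_k$, $z=\overline{e}_j$ and then pair both sides with $\overline{e}_\ell$ via $g$, using $\langle e_r,\overline{e}_\ell\rangle=\delta_{r\ell}$ to read off components.

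On the left-hand side, the cyclic term $R_{e_ie_k}\overline{e}_j$ vanishes because $R_{XY\ast\ast}=0$ whenever both of the first two slots are of type $(1,0)$; the remaining two cyclic terms are $R_{e_k\overline{e}_j}e_i$ and $-R_{e_i\overline{e}_j}e_k$, after applying antisymmetry of $R$ in its first pair of arguments to $R_{\overline{e}_je_i}e_k$. Pairing with $\overline{e}_\ell$ then reproduces exactly the left-hand side $R_{k\overline{j}i\overline{\ell}}-R_{i\overline{j}k\overline{\ell}}$ of the lemma.

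On the right-hand side I would show that all but one term drop out. The quadratic torsion terms all vanish: the inner torsions $T(e_k,\overline{e}_j)$ and $T(\overline{e}_j,e_i)$ are zero by $T(X,\overline{Y})=0$, and $T(T(e_i,e_k),\overline{e}_j)$ vanishes for the same reason since $T(e_i,e_k)$ is of type $(1,0)$. Of the three covariant-derivative terms, $(\nabla_{e_i}T)(e_k,\overline{e}_j)$ and $(\nabla_{e_k}T)(\overline{e}_j,e_i)$ vanish as well: expanding each by the Leibniz rule and using that $\nabla$ preserves the type decomposition (because $\nabla J=0$), every resulting summand is a value of $T$ on one $(1,0)$ and one $(0,1)$ argument, hence zero. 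Only $(\nabla_{\overline{e}_j}T)(e_i,e_k)$ survives, and its pairing with $\overline{e}_\ell$ is by definition the component $T^{\ell}_{ik,\,\overline{j}}$. Equating the two pairings yields the stated identity.

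I do not anticipate a genuine obstacle, as the content is entirely bookkeeping valid on any Hermitian manifold; the proof invokes none of the pluriclosed or SKL hypotheses, consistent with the generality of the statement. The only points requiring care are fixing the torsion-corrected first Bianchi identity with the correct sign convention and systematically tracking which of the nine resulting terms survive, where the two structural facts $T(X,\overline{Y})=0$ and $R_{XY\ast\ast}=0$, together with the type-preserving property of $\nabla$, do all the work.
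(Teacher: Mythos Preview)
Your argument is correct. The torsion-corrected first Bianchi identity is stated with the right sign, and your accounting of the nine cyclic terms is accurate: on the curvature side only $R_{k\overline{j}i\overline{\ell}}-R_{i\overline{j}k\overline{\ell}}$ survives after pairing with $\overline{e}_\ell$, while on the torsion side the three quadratic terms and two of the three derivative terms vanish by $T(X,\overline{Y})=0$ together with the type-preserving property of $\nabla$, leaving $\langle (\nabla_{\overline{e}_j}T)(e_i,e_k),\overline{e}_\ell\rangle = T^{\ell}_{ik,\,\overline{j}}$.

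By way of comparison, the paper does not actually prove this lemma; it simply cites Formula~(21) of \cite[Lemma~7]{YZ}. Your derivation is therefore more self-contained than the paper's treatment. The route you take---specializing the first Bianchi identity with torsion to mixed-type arguments---is in fact the standard mechanism behind that cited formula, so there is no genuine divergence in method, only in the level of detail provided.
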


This is just Formula (21) in \cite[Lemma 7]{YZ}. Note that our $T^k_{ij}$ is twice of the same notation in \cite{YZ}, and our $R$ is denoted as  $R^h$ there.

Next let us recall Gauduchon's torsion $1$-form $\eta$ \cite{Gauduchon} (here we took its $(1,0)$ part) which is defined by $\partial (\omega^{n-1}) = - \eta \wedge \omega^{n-1}$. It is a global $(1,0)$ form on $M^n$, and under any local unitary frame $e$, it has the expression:
$$ \eta = \sum_{i=1}^n \eta_i \varphi_i, \ \ \ \ \mbox{where} \ \ \ \eta_i = \sum_{k=1}^n T^k_{ki}. $$
Denote by $\chi = \sum_{i=1}^n \eta_{i, \overline{i}}$ the global smooth function on $M^n$ where $e$ is unitary and index after comma stands for covariant derivative with respect to the Chern connection $\nabla$. A direct computation leads to

\begin{lemma}
On any Hermitian manifold $(M^n,g)$, it holds
\begin{equation*}
 n\sqrt{-1} \partial \overline{\partial } (\omega^{n-1}) = (|\eta|^2-\chi  )\omega^n.
\end{equation*}
In particular, $\overline{\chi }=\chi$, and when $M^n$ is compact, we always have $\,\int_M \chi = \int_M |\eta|^2$.
\end{lemma}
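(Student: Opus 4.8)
The plan is to start from the defining equation of Gauduchon's torsion form, $\partial(\omega^{n-1}) = -\eta\wedge\omega^{n-1}$, together with its complex conjugate $\overline{\partial}(\omega^{n-1}) = -\overline{\eta}\wedge\omega^{n-1}$, which is valid because $\omega^{n-1}$ is a real form. Applying $\overline{\partial}$ to the first equation and using $\partial\overline{\partial} = -\overline{\partial}\partial$ together with $\overline{\partial}^2 = 0$, I would reduce the left-hand side to
\[
\partial\overline{\partial}(\omega^{n-1}) = \overline{\partial}\eta\wedge\omega^{n-1} + \eta\wedge\overline{\eta}\wedge\omega^{n-1},
\]
where the sign bookkeeping uses that $\eta$ has odd degree and the conjugate relation above. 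This splits the problem into evaluating two wedge products against $\omega^{n-1}$.

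Next I would record the elementary pointwise identity, valid in any unitary coframe,
\[
\sqrt{-1}\,\varphi_i\wedge\overline{\varphi}_j\wedge\omega^{n-1} = \frac{\delta_{ij}}{n}\,\omega^n,
\]
which follows by writing $\omega = \sqrt{-1}\sum_k\varphi_k\wedge\overline{\varphi}_k$ and expanding $\omega^{n-1}$, noting that only the ``diagonal'' terms survive. Applying this to $\eta\wedge\overline{\eta} = \sum_{i,j}\eta_i\overline{\eta}_j\,\varphi_i\wedge\overline{\varphi}_j$ immediately gives $n\sqrt{-1}\,\eta\wedge\overline{\eta}\wedge\omega^{n-1} = |\eta|^2\omega^n$, which produces the $|\eta|^2$ term. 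For the remaining term, the same identity shows that $n\sqrt{-1}\,\overline{\partial}\eta\wedge\omega^{n-1}$ equals the unitary trace of the $(1,1)$-form $\overline{\partial}\eta$ times $\omega^n$, so everything reduces to computing that trace.

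The crux --- and the step I expect to require the most care --- is to show that $\overline{\partial}\eta = -\sum_{i,\ell}\eta_{i,\overline{\ell}}\,\varphi_i\wedge\overline{\varphi}_\ell$, so that its trace is exactly $-\chi = -\sum_i\eta_{i,\overline{i}}$. Writing $\eta = \sum_i\eta_i\varphi_i$ and expanding $\overline{\partial}\eta = \sum_i\overline{\partial}\eta_i\wedge\varphi_i + \sum_i\eta_i\,\overline{\partial}\varphi_i$, each of the two pieces separately involves the $(0,1)$ part $\theta^{0,1}$ of the Chern connection form: the first through the relation $\overline{\partial}\eta_i = \sum_\ell\eta_{i,\overline{\ell}}\overline{\varphi}_\ell + \sum_j\eta_j\theta_{ij}^{0,1}$ coming from the definition of the covariant derivative, and the second through the $(1,1)$ part of the structure equation, $\overline{\partial}\varphi_i = -\sum_j\theta_{ji}^{0,1}\wedge\varphi_j$. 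The key observation is that after relabeling indices these two $\theta^{0,1}$-contributions cancel exactly, leaving the clean covariant expression above; verifying this cancellation (and hence that the exterior $\overline{\partial}$ agrees with the covariant derivative here, up to sign) is the main technical point. Combining the three computations then yields $n\sqrt{-1}\,\partial\overline{\partial}(\omega^{n-1}) = (|\eta|^2-\chi)\omega^n$.

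Finally I would deduce the two consequences. Since $\omega^{n-1}$ is real, $\sqrt{-1}\,\partial\overline{\partial}(\omega^{n-1})$ is a real $(n,n)$-form, so $(|\eta|^2-\chi)\omega^n$ is real; as $|\eta|^2$ and $\omega^n$ are real this forces $\overline{\chi} = \chi$. For the integral identity on compact $M^n$, I would note that $\partial\overline{\partial}(\omega^{n-1}) = d\big(\overline{\partial}(\omega^{n-1})\big)$ is exact (using $d = \partial + \overline{\partial}$ and $\overline{\partial}^2=0$), so Stokes' theorem gives $\int_M\partial\overline{\partial}(\omega^{n-1}) = 0$, and hence $\int_M|\eta|^2 = \int_M\chi$ after dividing by the constant relating $\omega^n$ to the volume form.
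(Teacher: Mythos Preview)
Your proposal is correct and is precisely the ``direct computation'' the paper alludes to but does not write out. The only methodological difference concerns the identity $\overline{\partial}\eta = -\sum_{i,\ell}\eta_{i,\overline{\ell}}\,\varphi_i\wedge\overline{\varphi}_\ell$: you establish it in an arbitrary unitary frame by tracking the two $\theta^{0,1}$-contributions and checking they cancel, whereas the paper (in the proof of the very next lemma) obtains the same identity by choosing a unitary frame with $\theta|_p=0$ at a given point, so that $\overline{\partial}\varphi|_p=0$ and the covariant and ordinary $\overline{\partial}$-derivatives of $\eta_i$ agree there. Both routes are valid; the special-frame argument is shorter, while your cancellation argument makes the frame-independence manifest.
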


Here $|\eta|^2$ stands for $\sum_{i=1}^n |\eta_i|^2$ under any unitary frame. Similarly, we will denote by $|T|^2$ the quantity $\sum_{i,j,k=1}^n |T^j_{ik}|^2$ under any unitary frame. Recall that  the {\em first, second, and third Ricci form} of the Chern curvature tensor $R$ are $(1,1)$-forms on $M^n$ defined by
$$\rho^{(r)} = \sqrt{-1} \sum_{i,j=1}^n \rho^{(r)}_{i\overline{j}} \varphi_i \wedge \overline{\varphi}_j, \ \ \ \ \ \ \ r=1, 2, 3,   \ \ \ \  \mbox{where} \ \ $$
\begin{equation}
 \rho^{(1)}_{i\overline{j}}=\sum_{r=1}^n R_{i\overline{j}r\overline{r}}, \ \ \ \ \ \rho^{(2)}_{i\overline{j}}=\sum_{r=1}^n R_{r\overline{r}i\overline{j}}, \ \ \ \ \ \rho^{(3)}_{i\overline{j}}=\sum_{r=1}^n R_{r\overline{j}i\overline{r}}.
 \end{equation}
There are two scalar curvatures associated with the Chern curvature $R$, namely, the trace of $ \rho^{(1)}$ or equivalently $ \rho^{(2)}$, denoted as $s$, and the trace of $ \rho^{(3)}$, denoted as $\hat{s}$:
\begin{equation}
s = \sum_{i,k=1}^n R_{i\overline{i}k\overline{k}}, \ \ \ \ \ \hat{s} = \sum_{i,k=1}^n R_{k\overline{i}i\overline{k}}.
\end{equation}
By Lemma 2, if we let $i=j$ and $k=\ell$ and sum up, then we immediately get the following
\begin{equation}
s - \hat{s} = \chi. \label{eq:sminuss}
\end{equation}

\begin{lemma}
Let $(M^n,g)$ be a Hermitian manifold. Under any local unitary frame $e$, it holds
\begin{equation}
 \sum_{i,j=1}^n \eta_{i, \overline{j}} \, \varphi_i \wedge \overline{\varphi}_j  = - \overline{\partial }\eta. \label{eq:dbareta}
\end{equation}
Here the index after comma again stands for covariant derivative with respect to the Chern connection.   In particular, Lemma 2 implies that
\begin{equation}
 \rho^{(3)} - \rho^{(1)} = \sqrt{-1} \,\overline{\partial }\eta . \label{eq:rhominusrho}
\end{equation}
\end{lemma}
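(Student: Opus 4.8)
The plan is to establish \eqref{eq:dbareta} by computing $\overline{\partial}\eta$ directly from the structure equations and extracting its $(1,1)$-part, and then to obtain \eqref{eq:rhominusrho} by contracting Lemma 2 into the result. Conceptually, \eqref{eq:dbareta} is nothing but the statement that the $(0,1)$-part of the Chern connection on the holomorphic cotangent bundle coincides with the Dolbeault operator $\overline{\partial}$; the task is to make this explicit in the unitary frame $e$ while keeping the type decomposition and the signs straight.

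First I would record how the covariant derivatives $\eta_{i,\overline{j}}$ are read off. Dualizing $\nabla e_i = \sum_j \theta_{ij} e_j$ gives $\nabla \varphi_i = -\sum_j \theta_{ji}\varphi_j$, so that $\nabla \eta = \sum_k (D\eta_k)\otimes \varphi_k$ with $D\eta_k = d\eta_k - \sum_i \eta_i \theta_{ki}$. By definition $\eta_{k,\overline{j}}$ is the component of $D\eta_k$ along $\overline{\varphi}_j$, that is, $(D\eta_k)^{(0,1)} = \overline{\partial}\eta_k - \sum_i \eta_i \theta_{ki}^{(0,1)} = \sum_j \eta_{k,\overline{j}}\,\overline{\varphi}_j$, where $\theta_{ki}^{(0,1)}$ denotes the $(0,1)$-part of the connection matrix.

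Next I would differentiate $\eta = \sum_i \eta_i \varphi_i$ and take the $(1,1)$-part of $d\eta$. Using $d\varphi_i = -\sum_j \theta_{ji}\wedge \varphi_j + \tau_i$, the torsion forms $\tau_i$ are of type $(2,0)$, and the piece $\theta_{ji}^{(1,0)}\wedge \varphi_j$ is likewise $(2,0)$, so both drop out and only $\overline{\partial}\varphi_i = -\sum_j \theta_{ji}^{(0,1)}\wedge \varphi_j$ survives. Hence $\overline{\partial}\eta = \sum_i \overline{\partial}\eta_i\wedge \varphi_i - \sum_{i,j}\eta_i\theta_{ji}^{(0,1)}\wedge \varphi_j = \sum_k (D\eta_k)^{(0,1)}\wedge \varphi_k$, and substituting the expression above together with $\overline{\varphi}_j\wedge \varphi_k = -\varphi_k\wedge \overline{\varphi}_j$ gives $\overline{\partial}\eta = -\sum_{k,j}\eta_{k,\overline{j}}\,\varphi_k\wedge \overline{\varphi}_j$, which is \eqref{eq:dbareta}. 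The only real friction is this type bookkeeping: one must check that exactly the $(0,1)$-part of the connection matrix reassembles with $\overline{\partial}\eta_k$ into the covariant derivative, while everything of type $(2,0)$, torsion included, is absorbed into $\partial\eta$ and plays no role.

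Finally, for \eqref{eq:rhominusrho} I would contract Lemma 2. Putting $k=\ell=r$ in $R_{r\overline{j}i\overline{r}} - R_{i\overline{j}r\overline{r}} = T^r_{ir,\,\overline{j}}$ and summing over $r$ gives $\rho^{(3)}_{i\overline{j}} - \rho^{(1)}_{i\overline{j}} = \sum_r T^r_{ir,\,\overline{j}}$. Since covariant differentiation commutes with contraction and $\sum_r T^r_{ir} = -\sum_r T^r_{ri} = -\eta_i$ by the skew-symmetry $T^r_{ir} = -T^r_{ri}$ and the definition $\eta_i = \sum_r T^r_{ri}$, this equals $-\eta_{i,\overline{j}}$. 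Assembling into $(1,1)$-forms gives $\rho^{(3)} - \rho^{(1)} = -\sqrt{-1}\sum_{i,j}\eta_{i,\overline{j}}\,\varphi_i\wedge \overline{\varphi}_j$, and inserting \eqref{eq:dbareta} converts the right-hand side into $\sqrt{-1}\,\overline{\partial}\eta$, as claimed.
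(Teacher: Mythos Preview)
Your argument is correct. The paper proves \eqref{eq:dbareta} by first observing that the left-hand side is frame-independent, then choosing a unitary frame with $\theta|_p=0$ at a fixed point $p$, so that at $p$ one has $\overline{\partial}\varphi=0$ and the identity reduces to $\overline{\partial}\eta = \overline{\partial}\eta_i\wedge\varphi_i = \eta_{i,\overline{j}}\,\overline{\varphi}_j\wedge\varphi_i$; for \eqref{eq:rhominusrho} it simply says ``let $k=\ell$ in Lemma~2 and sum up.'' Your version carries out the same computation in an arbitrary unitary frame, keeping the $(0,1)$-part of the connection matrix explicit and showing it reassembles with $\overline{\partial}\eta_k$ into the covariant derivative---this is the same idea without the normal-frame shortcut, trading brevity for a frame-independent calculation that makes the role of $(\nabla)^{0,1}=\overline{\partial}$ transparent.
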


\begin{proof}
Note that the left hand side of (\ref{eq:dbareta}) is independent of the choice of the unitary frame, hence is a globally defined $(1,1)$-form on $M^n$. Fix any point $p$ in $M$. We may choose a local unitary frame $e$ in a neighborhood of $p$ so that the  matrix $\theta$ of the Chern connection $\nabla$ under $e$ vanishes at $p$ (see for example \cite[Lemma 4]{YZ}. Use this frame, then at the point $p$ we have $\overline{\partial }\varphi =0$, hence
$$ \overline{\partial }\eta = \overline{\partial }\eta_i \wedge \varphi_i = \overline{e}_j (\eta_i) \,\overline{\varphi}_j \wedge \varphi_i = \eta_{i,\,\overline{j}}\, \overline{\varphi}_j \wedge \varphi_i.$$
This proves (\ref{eq:dbareta}).  Now let $k=\ell$ in Lemma 2 and sum up, we get (\ref{eq:rhominusrho}).
\end{proof}

Note that if we take trace on both sides of (\ref{eq:rhominusrho}), and use (\ref{eq:dbareta}), we get $\hat{s}-s = -\chi$, which is just (\ref{eq:sminuss}). Let us also introduce the following two $(1,1)$-forms:
\begin{eqnarray}
 \xi & = & \sqrt{-1} \sum_{i,j=1}^n \xi_{i\overline{j}} \, \varphi_i \wedge \overline{\varphi}_j , \ \ \ \ \ \ \xi_{i\overline{j}} \, = \, \sum_{r=1}^n T^j_{ir, \, \overline{r}}; \\
 \sigma & = & \sqrt{-1} \sum_{i,j=1}^n \sigma_{i\overline{j}} \, \varphi_i \wedge \overline{\varphi}_j , \ \ \ \ \ \sigma_{i\overline{j}} \, = \, \sum_{r,s=1}^n T^r_{is} \overline{ T^r_{js}}
 \end{eqnarray}
Clearly both of them are independent of the choice of local unitary frames, thus are globally defined $(1,1)$-forms on $M^n$. Also, $\sigma = \overline{\sigma }\geq 0$.

Now we are ready to state the property for pluriclosed manifold with constant holomorphic sectional curvature:

\begin{theorem}
Let $(M^n,g)$ be a pluriclosed manifold with holomorphic sectional curvature $H$ equal to a constant $c$. Then under any local unitary frame $e$, it holds
\begin{equation}
T^j_{ik,\,\overline{\ell}} - T^{\ell}_{ik,\,\overline{j}}  = \sum_{r=1}^n T^r_{ik}  \overline{ T^r_{j\ell} } \label{eq:TT}
\end{equation}
and the Chern curvature tensor is given by
\begin{equation}
R_{i\overline{j}k\overline{\ell}} = \frac{c}{2}\big( \delta_{ij} \delta_{k\ell} + \delta_{i\ell } \delta_{kj}\big) - \frac{1}{4} \sum_{r=1}^n T^r_{ik}  \overline{ T^r_{j\ell} } - \frac{1}{2} \big( T^{\ell}_{ik,\,\overline{j}} + \overline{ T^{k}_{j\ell ,\,\overline{i}} } \big)  \label{eq:R}
\end{equation}
for any $1\leq i,j,k,\ell \leq n$. By taking various traces in $(\ref{eq:TT})$ and $(\ref{eq:R})$, we get
\begin{eqnarray}
&& \xi_{i\overline{j}} + \eta_{i,\overline{j}} \,= \,\sigma_{i\overline{j}} , \ \ \ \ \mbox{hence} \ \ \ \ \xi = \sqrt{-1}\,\overline{\partial} \eta + \sigma  \ \ \ \ \mbox{and} \ \ \ 2\chi \, = \, |T|^2, \\
&& \rho^{(1)} \, = \, \frac{c}{2}(n+1)\omega - \frac{1}{4} \sigma + \frac{\sqrt{-1}}{2} (\partial \overline{\eta} - \overline{\partial }\eta ) \\
&& \rho^{(2)} \, = \, \frac{c}{2}(n+1)\omega + \frac{3}{4} \sigma - \frac{\sqrt{-1}}{2} (\partial \overline{\eta} - \overline{\partial }\eta ) \\
&& \rho^{(3)} \, = \, \frac{c}{2}(n+1)\omega - \frac{1}{4} \sigma + \frac{\sqrt{-1}}{2} (\partial \overline{\eta} + \overline{\partial }\eta )
\end{eqnarray}
In particular, when $M^n$ is compact it holds that $\ 2\int_M|\eta|^2 = \int_M|T|^2$.
\end{theorem}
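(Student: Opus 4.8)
The plan is to deduce the two identities (\ref{eq:TT}) and (\ref{eq:R}) by purely algebraic manipulations from Lemma 1 (the characterization of pluriclosedness), Lemma 2 (the first Bianchi identity), and the constant-$H$ condition (\ref{Rhat}), and then to obtain the trace formulas by contracting indices in (\ref{eq:TT}) and (\ref{eq:R}) and translating into differential forms via Lemmas 3 and 4.

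First I would prove (\ref{eq:TT}); note that this step uses only pluriclosedness, not the hypothesis on $H$. Rewriting the identity of Lemma 1 as $(R_{i\overline{j}k\overline{\ell}} - R_{k\overline{j}i\overline{\ell}}) + (R_{k\overline{\ell}i\overline{j}} - R_{i\overline{\ell}k\overline{j}}) = \sum_{r} T^r_{ik}\overline{T^r_{j\ell}}$ and applying Lemma 2 to each bracket (the second bracket being Lemma 2 applied with $j$ and $\ell$ swapped), one finds that the first bracket equals $-T^{\ell}_{ik,\overline{j}}$ and the second equals $T^{j}_{ik,\overline{\ell}}$, which is exactly (\ref{eq:TT}).

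For (\ref{eq:R}) I would express each of the four summands of $4\widehat{R}_{i\overline{j}k\overline{\ell}} = R_{i\overline{j}k\overline{\ell}} + R_{k\overline{j}i\overline{\ell}} + R_{i\overline{\ell}k\overline{j}} + R_{k\overline{\ell}i\overline{j}}$ in terms of $R_{i\overline{j}k\overline{\ell}}$ plus torsion-derivative terms. Lemma 2 gives $R_{k\overline{j}i\overline{\ell}} = R_{i\overline{j}k\overline{\ell}} + T^{\ell}_{ik,\overline{j}}$ directly. For $R_{i\overline{\ell}k\overline{j}}$, one conjugates, invokes the standard Hermitian symmetry $\overline{R_{i\overline{j}k\overline{\ell}}} = R_{j\overline{i}\ell\overline{k}}$ of the Chern curvature together with Lemma 2, and conjugates back, obtaining $R_{i\overline{\ell}k\overline{j}} = R_{i\overline{j}k\overline{\ell}} + \overline{T^{k}_{j\ell,\overline{i}}}$; then Lemma 2 once more yields $R_{k\overline{\ell}i\overline{j}} = R_{i\overline{\ell}k\overline{j}} + T^{j}_{ik,\overline{\ell}}$. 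Adding the four relations, using (\ref{eq:TT}) to eliminate $T^{j}_{ik,\overline{\ell}}$, and substituting (\ref{Rhat}) for $\widehat{R}$, one solves for $R_{i\overline{j}k\overline{\ell}}$ and obtains (\ref{eq:R}). I expect this bookkeeping (keeping the index permutations and the two conjugations consistent so that exactly the torsion terms displayed in (\ref{eq:R}) survive) to be the only genuine difficulty; everything else is routine.

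Finally I would read off the trace formulas. Summing (\ref{eq:R}) over $k=\ell$ gives $\rho^{(1)}$: the Kronecker terms contribute $\frac{c}{2}(n+1)\delta_{ij}$, the quadratic term becomes $-\frac{1}{4}\sigma_{i\overline{j}}$, and, since $\sum_{r} T^r_{ir} = -\eta_i$, the torsion-derivative terms become $\frac{1}{2}(\eta_{i,\overline{j}} + \overline{\eta_{j,\overline{i}}})$, which Lemma 4 converts into $\frac{\sqrt{-1}}{2}(\partial\overline{\eta} - \overline{\partial}\eta)$. Summing (\ref{eq:R}) instead as $\sum_{r} R_{r\overline{r}i\overline{j}}$ gives $\rho^{(2)}$, and as $\sum_{r} R_{r\overline{j}i\overline{r}}$ gives $\rho^{(3)}$; both computations involve the tensor $\xi_{i\overline{j}}$, which is removed using the identity $\xi_{i\overline{j}} + \eta_{i,\overline{j}} = \sigma_{i\overline{j}}$ (equivalently $\xi = \sqrt{-1}\,\overline{\partial}\eta + \sigma$), obtained by summing (\ref{eq:TT}) over $k=\ell$ and applying Lemma 4. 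Taking the trace $i=j$ of $\xi_{i\overline{j}} + \eta_{i,\overline{j}} = \sigma_{i\overline{j}}$ and noting $\sum_{i}\xi_{i\overline{i}} = \sum_{i}\eta_{i,\overline{i}} = \chi$ gives $2\chi = |T|^2$ pointwise; integrating over compact $M^n$ and using $\int_M\chi = \int_M|\eta|^2$ from Lemma 3 yields $2\int_M|\eta|^2 = \int_M|T|^2$.
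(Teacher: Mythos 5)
Your proposal is correct and follows essentially the same route as the paper: (\ref{eq:TT}) from Lemmas 1 and 2, then (\ref{eq:R}) by combining the constant-$H$ symmetrization with three applications of Lemma 2 (one via the conjugation symmetry $\overline{R_{i\overline{j}k\overline{\ell}}}=R_{j\overline{i}\ell\overline{k}}$) and eliminating $T^j_{ik,\overline{\ell}}$ via (\ref{eq:TT}), followed by the trace computations. The only cosmetic difference is that the paper splits $R_{k\overline{\ell}i\overline{j}}\pm R_{i\overline{j}k\overline{\ell}}$ into its symmetric and antisymmetric parts rather than summing the four terms of $4\widehat{R}$ directly, and it leaves the trace bookkeeping (which you carry out correctly) to the reader.
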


\begin{proof}
We start from Lemma 2. Switch $j$ and $\ell$ in Lemma 2, we get
$$ R_{k\overline{\ell}i\overline{j}} - R_{i\overline{\ell}k\overline{j}} = T^{j}_{ik, \,\overline{\ell}}. $$
Subtract from that the original formula of Lemma 2, we get
$$ T^{j}_{ik, \,\overline{\ell}} - T^{\ell}_{ik, \,\overline{j}} = \big( R_{i\overline{j}k\overline{\ell}} + R_{k\overline{\ell}i\overline{j}}\big)  -  \big( R_{k\overline{j}i\overline{\ell}} + R_{i\overline{\ell}k\overline{j}}\big) = \sum_{r=1}^n T^r_{ik} \overline{T^r_{j\ell}}\,,$$
where the last equality is by Lemma 1. This establishes (\ref{eq:TT}). Again by Lemma 2 we have
$$ R_{\ell \overline{k}j\overline{i}} - R_{j\overline{k}\ell \overline{i}} = T^{i}_{j\ell, \,\overline{k}}. $$
Taking complex conjugate on both sides, we get
$$ R_{k\overline{\ell}i\overline{j}} - R_{k\overline{j}i\overline{\ell}} = \overline{T^{i}_{j\ell, \,\overline{k}}}. $$
Add this to the original formula of Lemma 2, we get
\begin{equation}
R_{k\overline{\ell}i\overline{j}} - R_{i\overline{j}k\overline{\ell}} \ = \ T^{\ell}_{ik, \,\overline{j}} + \overline{T^{i}_{j\ell, \,\overline{k}}}. \ \ \ \ \ \ \ \ \  \label{eq:17}
\end{equation}
On the other hand, by adding (\ref{Rhat}) with the formula of Lemma 1, we get
\begin{equation}
R_{k\overline{\ell}i\overline{j}} + R_{i\overline{j}k\overline{\ell}} \, = \, c\,\big( \delta_{ij}\delta_{k\ell} + \delta_{i\ell}\delta_{kj} \big) + \frac{1}{2} \sum_{r=1}^n T^{r}_{ik}  \overline{T^{r}_{j\ell}}.  \label{eq:18}
\end{equation}
Subtract (\ref{eq:17}) from (\ref{eq:18}) and then divide by $2$, we obtain
$$ R_{i\overline{j}k\overline{\ell}} = \frac{c}{2} \,\big( \delta_{ij}\delta_{k\ell} + \delta_{i\ell}\delta_{kj} \big) + \frac{1}{4} \sum_{r=1}^n T^{r}_{ik}  \overline{T^{r}_{j\ell}} -\frac{1}{2}  \big( T^{\ell}_{ik, \,\overline{j}} + \overline{T^{i}_{j\ell, \,\overline{k}}} \big) .$$
Now by (\ref{eq:TT}), we have
$$ \overline{T^{i}_{j\ell, \,\overline{k}}} - \overline{T^{k}_{j\ell, \,\overline{i}}} = \sum_r \overline{T^{r}_{j\ell }  \overline{T^{r}_{ik}} } = \sum_r T^{r}_{ik}  \overline{T^{r}_{j\ell}}.$$
Plug this into the above expression for $R$ we get (\ref{eq:R}). The trace taking part is clear by the definitions, and in the integral equality we used Lemma 3. This completes the proof of Theorem 2.
\end{proof}

At this point we do not know how to deduce the K\"ahlerness of the metric for compact pluriclosed manifold with constant holomorphic sectional curvature. Theorem 2 nonetheless gives strong pointwise restrictions on such manifolds. Hopefully, by utilizing the compactness assumption more, one could eventually prove Conjecture 1 for all pluriclosed manifolds.

\vspace{0.3cm}

\section{Proof of Theorem 1 }

In this section, we will restrict ourselves to a special type of pluriclosed manifolds, the so-called Strominger K\"ahler-like (or SKL for short) manifolds. Let $(M^n,g)$ be a Hermitian manifold. Denote by $\nabla^s$ its Strominger connection (also known as Bismut connection in some literature). It is the unique connection that is Hermitian (i.e., $\nabla^sg=0$, $\nabla^sJ=0$) and with totally skew-symmetric torsion:
$$ \langle T^s(x,y) , z\rangle = - \langle T^s(x,z) , y\rangle  $$
for any tangent vector $x$, $y$, $z$ on $M$. Here $T^s$ is the torsion tensor of $\nabla^s$ defined by
$$ T^s(x,y)=\nabla^s_xy - \nabla^s_yx - [x,y].$$
Let $e$ be a local unitary frame with dual coframe $\varphi$ as before, then it is well known that the Strominger connection is given by
\begin{equation}
\nabla^s e_i - \nabla e_i = \sum_j \big( \sum_k \big( T^j_{ik} \varphi_k - \overline{T^i_{jk}} \overline{\varphi}_k  \big)  \big) e_j   \label{eq:nablaminusnabla}
\end{equation}
See for example \cite[Lemma 2]{WYZ}, where $\nabla^s$ is denoted as $\nabla^b$ (note that our $T^j_{ik}$ here is twice as much as that in there). As a consequence, we get (see (22) in \cite{WYZ}) the components of $T^s$ as
\begin{equation}
T^s(e_i,e_j) = - \sum_k T^k_{ij}e_k, \ \ \ \ T^s(e_i, \overline{e}_j) = \sum_k \big( T^j_{ik} \overline{e}_k - \overline{T^i_{jk}} \,e_k \big)
\end{equation}

Let us denote by $T^j_{ik\mid \overline{\ell}}$ the components of the covariant derivative with respect to the Strominger connection $\nabla^s$. Then by (\ref{eq:nablaminusnabla}) we get
\begin{equation}
T^j_{ik\mid \overline{\ell}} - T^j_{ik, \overline{\ell}} = \sum_r \big( T^j_{kr} \overline{ T^i_{\ell r}} -  T^j_{ir} \overline{ T^k_{\ell r}} - T^r_{ik} \overline{ T^r_{j\ell }}  \big)   \label{eq:TminusT}
\end{equation}
for any $1\leq i,j,k,\ell\leq n$. Denote by $R^s$ the curvature tensor of $\nabla^s$. We have

\begin{lemma}
Given any Hermitian manifold $(M^n,g)$, then under any local unitary frame $e$,
\begin{equation}
R^s_{i\overline{j}k\overline{\ell}} - R_{i\overline{j}k\overline{\ell}} \ =  \ T^{\ell}_{ik, \,\overline{j}} + \overline{T^{k}_{j\ell, \,\overline{i}}} +  \sum_r \big(  T^{r}_{ik}  \overline{T^{r}_{j\ell}} -   T^{\ell}_{ir}  \overline{T^{k}_{jr}} \big)   \label{eq:RminusR}
\end{equation}
\end{lemma}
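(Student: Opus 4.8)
The plan is to derive (\ref{eq:RminusR}) by a direct computation from the relation (\ref{eq:nablaminusnabla}) between the Strominger and Chern connections, exactly in the style of the proof of Lemma 5 and the identity (\ref{eq:TminusT}). Writing $\nabla^s = \nabla + \gamma$, where by (\ref{eq:nablaminusnabla}) the $\mathrm{End}(T^{1,0}M)$-valued one-form $\gamma$ has components $\gamma_{ij} = \sum_k (T^j_{ik}\varphi_k - \overline{T^i_{jk}}\,\overline{\varphi}_k)$, the curvatures are related by the standard formula $R^s = R + \nabla\gamma + \gamma\wedge\gamma$ (here $\nabla\gamma$ is the exterior covariant derivative of $\gamma$ with respect to the Chern connection). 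So the first step is to expand $R^s_{i\overline{j}k\overline{\ell}} = \langle R^s_{e_k\overline{e}_\ell}e_i, e_j\rangle$ using this identity and extract the $\varphi_k\wedge\overline{\varphi}_\ell$ component.

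First I would compute the $(\nabla\gamma)$-contribution. Since we only want the mixed $(1,1)$-component $\varphi_k\wedge\overline{\varphi}_\ell$, only two pieces of $d\gamma_{ij}$ survive: $\overline{\partial}$ hitting the $(1,0)$-part $\sum_k T^j_{ik}\varphi_k$, and $\partial$ hitting the $(0,1)$-part $-\sum_k\overline{T^i_{jk}}\,\overline{\varphi}_k$. Working at a point $p$ with $\theta(p)=0$ (as in the proof of Lemma 4), the covariant-exterior derivative reduces to the ordinary one there, and one reads off contributions $T^\ell_{ik,\overline{j}}$ from the first piece and $\overline{T^k_{j\ell,\overline{i}}}$ from the second (after using $T^j_{ik}=-T^k_{ij}$, or rather being careful with index placement; this is the routine bookkeeping). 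The torsion-connection relation $\Theta_{ij}=d\theta_{ij}-\sum_r\theta_{ir}\wedge\theta_{rj}$ guarantees nothing else enters at $p$.

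Next I would compute the quadratic term $\gamma\wedge\gamma$, i.e. $\sum_r \gamma_{ir}\wedge\gamma_{rj}$. Plugging in the components and keeping only the $\varphi_k\wedge\overline{\varphi}_\ell$-part, the surviving products are $(T^r_{ia}\varphi_a)\wedge(-\overline{T^r_{jb}}\,\overline{\varphi}_b)$ and $(-\overline{T^i_{ra}}\,\overline{\varphi}_a)\wedge(T^j_{rb}\varphi_b)$; matching $a=k$, $b=\ell$ in the first and $a=\ell$, $b=k$ in the second (with a sign from swapping the one-forms) yields precisely $\sum_r(T^r_{ik}\overline{T^r_{j\ell}} - T^\ell_{ir}\overline{T^k_{jr}})$ up to the index conventions. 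Assembling the three contributions gives (\ref{eq:RminusR}). I expect the main obstacle to be purely notational: tracking the twice-normalization of $T^j_{ik}$ relative to \cite{WYZ}, the antisymmetry $T^k_{ij}=-T^k_{ji}$, and which slot of $R^s_{i\overline{j}k\overline{\ell}}$ corresponds to which factor of $\gamma$, so as to land the torsion indices in exactly the positions displayed in (\ref{eq:RminusR}) rather than a permuted-conjugated version of them. A useful cross-check is to contract or symmetrize (\ref{eq:RminusR}) and compare against the known scalar-curvature identities for $\nabla^s$ (e.g. it must be compatible with the SKL relations derived later), and to verify it reduces correctly when $T=0$.
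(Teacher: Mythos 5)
Your proposal is correct and follows essentially the same route as the paper: the paper also writes $\theta^s=\theta+\gamma$ with $\gamma_{ij}=\sum_k(T^j_{ik}\varphi_k-\overline{T^i_{jk}}\,\overline{\varphi}_k)$, works at a point where $\theta$ vanishes so that $\Theta^s=\Theta+d\gamma-\gamma\wedge\gamma$, and reads off the $(1,1)$-part, obtaining the two derivative terms from $\overline{\partial}\gamma'$ and $\partial\overline{\gamma'}$ and the two quadratic terms from $\gamma\wedge\gamma$. The only caveats are exactly the bookkeeping issues you flag yourself (the sign convention in $\Theta=d\theta-\theta\wedge\theta$ and the fact that the matrix entry indices of $\Theta$ are the \emph{last} two slots of $R$ while the form indices are the first two), and these work out to the stated formula.
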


\begin{proof}
Let us define the linear operator $\gamma $ by letting
$$ \gamma_{ij}= \gamma'_{ij} - \overline{\gamma'_{ji} }, \ \ \ \ \ \gamma'_{ij} = \sum_k T^j_{ik} \varphi_k $$
where $e$ is any local unitary frame with $\varphi$ its dual coframe. Note that this $\gamma$ is twice of the $\gamma$ in \cite{YZ}. Denote by $\theta^s$, $\Theta^s$ the matrix of connection and curvature of $\nabla^s$ under the frame $e$. Then we always have $\theta^s=\theta + \gamma$. For a fixed point $p\in M$, we may choose $e$ so that $\theta|_p=0$. This leads to
$$ \Theta^s = d\theta^s - \theta^s \wedge \theta^s = \Theta + d\gamma - \gamma \wedge \gamma .$$
Taking the $(1,1)$-part of the entry forms, we get
$$ (\Theta^s)^{(1,1)}_{k\ell}-\Theta_{k\ell} \ = \ \overline{\partial}\gamma'_{k\ell} - \partial \overline{\gamma'_{\ell k}} + \sum_r \big( \gamma'_{kr} \overline{\gamma'_{\ell r} } + \overline{\gamma'_{rk } }  \gamma'_{r \ell } \big) $$
at the point $p$. Note that at the point $p$, since $\theta|_p=0$, the structure equation gives $\overline{\partial }\varphi =0$ at $p$, while  $\partial \varphi_i = \frac{1}{2}\sum_{j,k=1}^n T^i_{jk} \varphi_j\wedge \varphi_k$ at $p$. So by looking at the coefficients in front of the $\varphi_i \wedge \overline{\varphi}_j$ term in the above identity, we get
$$ R^s_{i\overline{j}k\overline{\ell}} - R_{i\overline{j}k\overline{\ell}} \ =  \ T^{\ell}_{ik, \,\overline{j}} + \overline{T^{k}_{j\ell, \,\overline{i}}} +  \sum_r \big(  T^{r}_{ik}  \overline{T^{r}_{j\ell}} -   T^{\ell}_{ir}  \overline{T^{k}_{jr}} \big) .$$
This completes the proof of the lemma.
\end{proof}

Now let us suppose that $(M^n,g)$ is a SKL manifold. By \cite{ZhaoZ1}, we know that $g$ is pluriclosed, and $\nabla^sT^s=0$. Clearly, the latter is equivalent to $\nabla^sT=0$. Thus by (\ref{eq:TminusT}), we know that for any SKL manifold, the Chern covariant derivative of the torsion is given by a quadratic form of the torsion:
 \begin{equation}
 T^j_{ik, \overline{\ell}} = \sum_r \big( - T^j_{kr} \overline{ T^i_{\ell r}} +  T^j_{ir} \overline{ T^k_{\ell r}} + T^r_{ik} \overline{ T^r_{j\ell }}  \big)   \label{eq:TminusT2}
\end{equation}
for any $1\leq i,j,k,\ell\leq n$. Using this formula to replace the two derivative terms in Lemma 5, we get

\begin{lemma}
If $(M^n,g)$ is a SKL manifold, then under any local unitary frame $e$ it holds
\begin{equation}
R^s_{i\overline{j}k\overline{\ell}} - R_{i\overline{j}k\overline{\ell}} \ =  \  \sum_r \big( T^{\ell}_{ir}  \overline{T^{k}_{jr}} -  T^{r}_{ik}  \overline{T^{r}_{j\ell}} -   T^{j}_{ir}  \overline{T^{k}_{\ell r}}-   T^{\ell}_{kr}  \overline{T^{i}_{jr}} \big)   \label{eq:RminusR2}
\end{equation}
\end{lemma}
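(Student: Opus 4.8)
The plan is to substitute the SKL identity (\ref{eq:TminusT2}) directly into the general formula of Lemma 5. Lemma 5 expresses $R^s_{i\overline{j}k\overline{\ell}} - R_{i\overline{j}k\overline{\ell}}$ in terms of the two first-order Chern covariant derivatives $T^{\ell}_{ik,\overline{j}}$ and $\overline{T^{k}_{j\ell,\overline{i}}}$, together with a purely quadratic torsion term. Because $(M^n,g)$ is SKL, formula (\ref{eq:TminusT2}) rewrites each of these covariant derivatives as an explicit quadratic expression in the torsion coefficients, so the entire right-hand side becomes algebraic in $T$, and the claimed identity (\ref{eq:RminusR2}) should drop out after collecting terms.

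First I would specialize (\ref{eq:TminusT2}) to compute $T^{\ell}_{ik,\overline{j}}$: take the upper index of (\ref{eq:TminusT2}) to be $\ell$, the two lower torsion indices to be $i,k$, and the derivative index to be $j$, obtaining a sum over $r$ of three quadratic terms. Next I would compute $T^{k}_{j\ell,\overline{i}}$ in the same manner (upper index $k$, lower indices $j,\ell$, derivative index $i$) and then take its complex conjugate, using that conjugation interchanges $T$ and $\overline{T}$ in each product. Throughout I would normalize indices by means of the antisymmetry $T^{r}_{ab}=-T^{r}_{ba}$, so that every quadratic monomial is brought into one of the four shapes appearing in (\ref{eq:RminusR2}).

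Then I would add the three pieces: the expression just found for $T^{\ell}_{ik,\overline{j}}$, the expression for $\overline{T^{k}_{j\ell,\overline{i}}}$, and the quadratic term $\sum_r\big(T^{r}_{ik}\overline{T^{r}_{j\ell}} - T^{\ell}_{ir}\overline{T^{k}_{jr}}\big)$ already present in Lemma 5, and collect coefficients. One expects the monomials $T^{\ell}_{ir}\overline{T^{k}_{jr}}$ to combine with total coefficient $+1$, the monomials $T^{r}_{ik}\overline{T^{r}_{j\ell}}$ to combine with coefficient $-1$, and the two remaining monomials $T^{j}_{ir}\overline{T^{k}_{\ell r}}$ and $T^{\ell}_{kr}\overline{T^{i}_{jr}}$ to survive with coefficient $-1$ each, which is precisely the right-hand side of (\ref{eq:RminusR2}).

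The only real obstacle is bookkeeping: one must track the placement of each of the four indices $i,j,k,\ell$ through the substitution and handle the several complex conjugations and the sign changes coming from antisymmetry of $T$ in its lower indices without error. There is no geometric subtlety beyond what is already encoded in Lemma 5 and the SKL identity (\ref{eq:TminusT2}), so once the indices are managed the cancellations are forced.
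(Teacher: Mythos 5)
Your proposal is correct and is exactly the paper's argument: the paper obtains Lemma 6 by substituting the SKL identity (\ref{eq:TminusT2}) for the two covariant-derivative terms in Lemma 5 and collecting terms. The coefficients you predict ($+1$ for $T^{\ell}_{ir}\overline{T^{k}_{jr}}$, $-1$ for each of the other three monomials) are what the bookkeeping indeed yields.
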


Let us denote by $\widehat{R}^s$ the symmetrization of $R^s$, namely,
\begin{equation}
\widehat{R}^s_{i\overline{j}k\overline{\ell}} = \frac{1}{4} \big( R^s_{i\overline{j}k\overline{\ell}} + R^s_{k\overline{j}i\overline{\ell}} +  R^s_{i\overline{\ell}k\overline{j}} +  R^s_{k\overline{\ell}i\overline{j}} \big)
\end{equation}
Taking the symmetrization of the formula in Lemma 6, and using the fact that $R^s=\widehat{R}^s$ in the SKL case, a straight forward computation leads to the following
\begin{lemma}
For any SKL manifold, then under any unitary frame it holds that
\begin{equation}
R^s_{i\overline{j}k\overline{\ell}} \ = \ \widehat{R}_{i\overline{j}k\overline{\ell}} -  \sum_r \big( T^{j}_{ir}  \overline{T^{k}_{\ell r}} + T^{\ell}_{ir}  \overline{T^{k}_{jr}} +     T^{j}_{kr}  \overline{T^{i}_{\ell r}} +   T^{\ell}_{kr}  \overline{T^{i}_{jr}} \big)   \label{eq:Rs}
\end{equation}
\end{lemma}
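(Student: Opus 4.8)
The plan is to derive Lemma 7 by taking the symmetrization of both sides of the identity in Lemma 6, and then cancelling the contribution of $\widehat{R}^s - R^s$ using the defining property of SKL manifolds. Recall that for an SKL manifold the curvature tensor $R^s$ of the Strominger connection obeys all the K\"ahler symmetries; in particular $R^s = \widehat{R}^s$, i.e. $R^s$ is already symmetric in the sense that $R^s_{i\overline{j}k\overline{\ell}} = R^s_{k\overline{j}i\overline{\ell}}$ (and hence, by the Hermitian symmetry $R^s_{i\overline{j}k\overline{\ell}} = \overline{R^s_{j\overline{i}\ell\overline{k}}}$, also symmetric in $\overline{j}\leftrightarrow\overline{\ell}$). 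So applying the symmetrization operator $\widehat{\,\cdot\,}$ to the left-hand side of (\ref{eq:RminusR2}) simply returns $R^s_{i\overline{j}k\overline{\ell}}$ itself.

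First I would write $R^s_{i\overline{j}k\overline{\ell}} = \widehat{R}^s_{i\overline{j}k\overline{\ell}}$, expand $\widehat{R}^s$ via its definition in terms of $R^s$, and then substitute Lemma 6 for each of the four terms $R^s_{i\overline{j}k\overline{\ell}}, R^s_{k\overline{j}i\overline{\ell}}, R^s_{i\overline{\ell}k\overline{j}}, R^s_{k\overline{\ell}i\overline{j}}$. On the Chern side this produces $\widehat{R}_{i\overline{j}k\overline{\ell}}$ (the symmetrization of $R$, already a globally defined object in the excerpt). On the torsion side I get a sum of $4\times 4 = 16$ quadratic torsion terms, namely the four index-permuted copies of $\sum_r\big(T^{\ell}_{ir}\overline{T^k_{jr}} - T^r_{ik}\overline{T^r_{j\ell}} - T^j_{ir}\overline{T^k_{\ell r}} - T^\ell_{kr}\overline{T^i_{jr}}\big)$ obtained by the substitutions $(i,k)\to(k,i)$, $(j,\ell)\to(\ell,j)$, and both. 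The main task is then a bookkeeping exercise: collect these $16$ terms, use the antisymmetry $T^m_{ab} = -T^m_{ba}$ where needed, and check that $8$ of them cancel in pairs while the remaining $8$ collapse to $-2\sum_r\big(T^j_{ir}\overline{T^k_{\ell r}} + T^\ell_{ir}\overline{T^k_{jr}} + T^j_{kr}\overline{T^i_{\ell r}} + T^\ell_{kr}\overline{T^i_{jr}}\big)$; dividing the whole identity by the factor implicit in the definition of $\widehat{\,\cdot\,}$ (the $\tfrac14$) then yields exactly (\ref{eq:Rs}).

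I expect the main obstacle to be purely organizational rather than conceptual: keeping careful track of which of the four index-substitutions acts on which of the four torsion monomials, and correctly applying the antisymmetry of $T$ in its lower indices so that the $T^r_{ik}\overline{T^r_{j\ell}}$-type terms (which are symmetric under neither swap in an obvious way) actually cancel against each other. A useful sanity check along the way is to verify the resulting right-hand side has the correct symmetry under $(i,k)\leftrightarrow(k,i)$ and $(\overline{j},\overline{\ell})\leftrightarrow(\overline{\ell},\overline{j})$ — which it must, since $R^s$ is SKL — and that it is Hermitian, i.e. invariant under complex conjugation together with $(i,k)\leftrightarrow(j,\ell)$; these symmetries pin down the combination $T^j_{ir}\overline{T^k_{\ell r}} + T^\ell_{ir}\overline{T^k_{jr}} + T^j_{kr}\overline{T^i_{\ell r}} + T^\ell_{kr}\overline{T^i_{jr}}$ up to scale, so matching the coefficient is the only real arithmetic. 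No compactness or integration is needed here; it is an entirely pointwise, algebraic consequence of Lemma 6 and the SKL condition.
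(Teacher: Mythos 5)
Your overall strategy is exactly the paper's: the published proof of this lemma consists of nothing more than symmetrizing the identity of Lemma 6 and invoking $R^s=\widehat{R}^s$, so conceptually your proposal and the paper coincide. The problem is that your bookkeeping, as described, does not check out. Write $Q_{i\overline{j}k\overline{\ell}}=\sum_r\big(T^{\ell}_{ir}\overline{T^{k}_{jr}}-T^{r}_{ik}\overline{T^{r}_{j\ell}}-T^{j}_{ir}\overline{T^{k}_{\ell r}}-T^{\ell}_{kr}\overline{T^{i}_{jr}}\big)$ for the right side of (\ref{eq:RminusR2}) and add its four index-permuted copies $Q_{i\overline{j}k\overline{\ell}}+Q_{k\overline{j}i\overline{\ell}}+Q_{i\overline{\ell}k\overline{j}}+Q_{k\overline{\ell}i\overline{j}}$. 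The four terms of type $T^{r}_{ik}\overline{T^{r}_{j\ell}}$ appear with signs $-,+,+,-$ (using $T^r_{ki}=-T^r_{ik}$ and $\overline{T^r_{\ell j}}=-\overline{T^r_{j\ell}}$) and cancel, while each of the four monomial types $T^{j}_{ir}\overline{T^{k}_{\ell r}}$, $T^{\ell}_{ir}\overline{T^{k}_{jr}}$, $T^{j}_{kr}\overline{T^{i}_{\ell r}}$, $T^{\ell}_{kr}\overline{T^{i}_{jr}}$ occurs exactly three times, with signs $+,-,-$. So twelve of the sixteen terms cancel in pairs (not eight), four survive, and the total is $-\sum_r\big(T^{j}_{ir}\overline{T^{k}_{\ell r}}+T^{\ell}_{ir}\overline{T^{k}_{jr}}+T^{j}_{kr}\overline{T^{i}_{\ell r}}+T^{\ell}_{kr}\overline{T^{i}_{jr}}\big)$ rather than your $-2\sum_r(\cdots)$. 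After the factor $\tfrac14$ in the definition of the symmetrization, the torsion contribution is therefore $-\tfrac14\sum_r(\cdots)$, whereas your count would give $-\tfrac12\sum_r(\cdots)$.

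Note that neither $-\tfrac12$ nor $-\tfrac14$ matches the coefficient $-1$ printed in (\ref{eq:Rs}); taking Lemma 6 at face value, the constant produced by this symmetrization is $-\tfrac14$. The discrepancy is harmless for the application in the proof of Theorem 1, where the lemma is only used to conclude that certain expressions vanish, but it does mean your claim that the computation "yields exactly (\ref{eq:Rs})" cannot be right as stated. Your proposed sanity check via the symmetries of the right-hand side only determines the answer up to scale, as you yourself note, so it cannot detect precisely this kind of error: the coefficient must come from the term count, which is the one step of your sketch that is wrong and needs to be redone carefully.
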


Next, we will need the following properties of any non-K\"ahler SKL manifold $(M^n,g)$. We refer the readers to \cite{ZhaoZ1} and \cite{YZZ} for their proofs.
\begin{lemma}
Let $(M^n,g)$ be a  non-K\"ahler SKL manifold. Then for any $p\in M$, there exists a local unitary frame $e$ in a neighborhood of $p$ such that $\eta_n=\lambda$ where $\lambda$ is a positive constant, $\eta_1=\cdots = \eta_{n-1}=0$, and
$$ T^n_{\ast \ast }=0, \ \ \ T^j_{in}=\delta_{ij} a_i, \ \ \ R^s_{i\overline{j}k\overline{n}}=0 $$
for any indices $i$, $j$, $k$, where $a_i$ are constants with $a_n=0$ and $a_1 +\cdots + a_{n-1}=\lambda$.
\end{lemma}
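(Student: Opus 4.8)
The plan is to leverage the two structural facts about a non-K\"ahler SKL manifold recorded above: by \cite{ZhaoZ1}, $g$ is pluriclosed and the torsion is $\nabla^s$-parallel, $\nabla^s T^s=0$ (equivalently $\nabla^s T=0$); and $R^s$ obeys all the K\"ahler symmetries. First I would locate the distinguished direction $e_n$. Since $\eta$ is a metric contraction of the $\nabla^s$-parallel tensor $T^s$ and $\nabla^s$ preserves $g$ and $J$, the $(1,0)$-form $\eta$ is $\nabla^s$-parallel, so $|\eta|$ is a constant $\lambda\ge 0$. If $\lambda=0$ then $\eta\equiv 0$, i.e. $g$ is balanced; being also pluriclosed, $g$ would then be K\"ahler, contrary to hypothesis. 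Hence $\lambda>0$ and $\eta$ is nowhere zero. Let $e_n$ be the unit $(1,0)$-field metrically dual to $\eta/\lambda$; it is $\nabla^s$-parallel, and completing it to a local unitary frame $e=\{e_1,\dots,e_n\}$ near $p$ gives $\eta=\lambda\varphi_n$, that is $\eta_n=\lambda$ and $\eta_1=\dots=\eta_{n-1}=0$. The curvature identity $R^s_{i\overline{j}k\overline{n}}=0$ then comes for free: both $e_n$ and $\overline{e}_n$ are $\nabla^s$-parallel and $R^s_{xy}$ is skew-adjoint (metric connection), so $R^s_{i\overline{j}k\overline{n}}=-\langle R^s_{e_i\overline{e}_j}\overline{e}_n,e_k\rangle=0$.

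The crux is the torsion. The first Bianchi identity for $\nabla^s$, together with $\nabla^s T^s=0$, says that the cyclic sum of $R^s(X,Y)Z$ equals the cyclic sum of $T^s(T^s(X,Y),Z)$. I would check that K\"ahler-likeness of $R^s$ makes the curvature side vanish for every choice of types of $X,Y,Z$: for three $(1,0)$ arguments the relevant components of $R^s$ are not of the admissible form $R^s_{X\overline{Y}Z\overline{W}}$, and for arguments of type $(1,0),(1,0),(0,1)$ the sum collapses, after pairing with the frame, to $R^s_{k\overline{j}i\overline{\ell}}-R^s_{i\overline{j}k\overline{\ell}}$, which is zero by the symmetry $R^s_{i\overline{j}k\overline{\ell}}=R^s_{k\overline{j}i\overline{\ell}}$ (the $(0,1)$-output components vanish for the same inadmissibility reason). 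Thus the bracket $[X,Y]:=-T^s(X,Y)$ satisfies the Jacobi identity on $(T_{\mathbb C}M)_p$, making $\mathfrak{g}:=(T^{1,0}_pM,-T^s)$ a complex Lie algebra with structure constants $T^k_{ij}$. A short computation identifies $\eta$ with $-\mathrm{tr}(\mathrm{ad}_{\,\cdot\,})$ on $\mathfrak{g}$, so $[\mathfrak{g},\mathfrak{g}]\subseteq\ker\eta=\mathrm{span}(e_1,\dots,e_{n-1})$; reading off the $e_n$-component of $[e_i,e_j]$ gives exactly $T^n_{ij}=0$ for all $i,j$, i.e. $T^n_{\ast\ast}=0$.

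Once $T^n_{\ast\ast}=0$ is known, the formula $T^s(e_i,\overline{e}_j)=\sum_r(T^j_{ir}\overline{e}_r-\overline{T^i_{jr}}\,e_r)$ shows that $T^s(e_i,\overline{e}_n)$ lies in $T^{1,0}$, so the endomorphism $B:=[e_n,\cdot]$ of $T^{1,0}$ and the endomorphism $v\mapsto -[\overline{e}_n,v]$ are mutually adjoint with respect to the Hermitian inner product; applying the Jacobi identity to $(e_n,\overline{e}_n,e_i)$ (with $[e_n,\overline{e}_n]=0$, again from $T^n_{\ast\ast}=0$) yields $BB^{\ast}=B^{\ast}B$, so $B$ is normal. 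Being assembled from the $\nabla^s$-parallel objects $e_n$, $\overline{e}_n$ and $T^s$, the operator $B$ is $\nabla^s$-parallel, hence has locally constant eigenvalues and smooth eigenbundles of locally constant rank. Since $e_n$ is an eigenvector of $B$ with eigenvalue $0$ and $E:=e_n^{\perp}$ is invariant under both $B$ and $B^{\ast}$ with $B|_E$ normal, a unitary change of $e_1,\dots,e_{n-1}$ within $E$ diagonalizes $B|_E$: $[e_n,e_i]=-a_ie_i$ with $a_i$ constant, i.e. $T^j_{in}=\delta_{ij}a_i$. Putting $a_n:=0$ and reading off $\eta_n=\sum_k T^k_{kn}=\sum_{i=1}^{n-1}a_i=\lambda$ finishes the proof.

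I expect the decisive step to be the Bianchi-to-Jacobi reduction in the second paragraph: it is precisely the K\"ahler-likeness of $R^s$ that annihilates the curvature side of the first Bianchi identity and reveals the torsion as honest Lie-algebra structure constants; once that algebraic picture is available, the vanishing $T^n_{\ast\ast}=0$, the normality of $\mathrm{ad}_{e_n}$, and its diagonalizability all follow from linear algebra combined with $\nabla^s$-parallelism. The one technical subtlety to keep an eye on is that the diagonalizing unitary frame can be chosen smoothly near $p$; this is guaranteed because $B$ is parallel, so its eigenvalues are (locally) constant and the eigenbundles have locally constant rank.
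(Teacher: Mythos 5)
Your proof is correct. Note that the paper does not actually prove this lemma --- it is quoted from \cite{ZhaoZ1} and \cite{YZZ} --- so what you have produced is a self-contained reconstruction, and it follows the same mechanism as those references: the first Bianchi identity for $\nabla^s$ plus $\nabla^s T^s=0$ and the K\"ahler symmetries of $R^s$ turn the torsion into the structure constants of a Lie algebra on each tangent space, after which the distinguished direction, the vanishing of $T^n_{**}$, and the diagonalization of $\mathrm{ad}_{e_n}$ are linear algebra combined with parallelism. I checked the individual steps: the cyclic curvature sums do vanish for both the $(1,0)^3$ and the mixed type, the identification $\eta=-\mathrm{tr}(\mathrm{ad})$ gives $[\mathfrak g,\mathfrak g]\subseteq\ker\eta$ hence $T^n_{**}=0$, the Jacobi identity on $(e_n,\overline e_n,e_i)$ gives exactly $(BB^*)_{\ell i}=\sum_k T^{\ell}_{kn}\overline{T^i_{kn}}=\sum_k \overline{T^k_{\ell n}}T^k_{in}=(B^*B)_{\ell i}$, and parallelism of $B$ makes the eigenvalues locally constant and the eigenbundles smooth, so the admissible frame exists. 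Two small points worth making explicit if you write this up: first, the step ``$\eta\equiv 0$ and pluriclosed imply K\"ahler'' is the (pointwise, compactness-free) fact that a balanced pluriclosed metric is K\"ahler, which the paper also invokes without proof; second, the identity $R^s_{i\overline j k\overline n}=0$ uses that $\overline e_n$ is $\nabla^s$-parallel together with the skew-adjointness of $R^s_{xy}$ for a metric connection, which you did state but which is the only place the precise normalization of $e_n$ as the metric dual of $\eta/\lambda$ (rather than any unit vector in that line) matters for getting $\eta_n=\lambda$ real and positive automatically.
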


Such a local frame $e$ is called an {\em admissible frame}. Now we are finally ready to prove Theorem 1 stated in the introduction section.

\vspace{0.3cm}

\begin{proof}[\textbf{Proof of Theorem 1:}] Let $(M^n,g)$ be a SKL manifold with constant holomorphic sectional curvature, namely, $H=c$. We want to show that $M^n$ must be K\"ahler, hence a complex space form. Assume that $g$ is not K\"ahler, then  locally around any $p\in M$ there always exists an admissible unitary frame $e$ by Lemma 8, and we have $R^s_{n\overline{n} n\overline{n}} =0$. Since $T^n_{\ast \ast }=0$,  Lemma 7 leads to
$$ R^s_{n\overline{n} n\overline{n}} = \widehat{R}_{n\overline{n} n\overline{n}} $$
Thus $c=0$, hence $\widehat{R}=0$. Again by Lemma 7, we have
\begin{equation*}
R^s_{i\overline{j}k\overline{\ell}} \ = \  -  \sum_r \big( T^{j}_{ir}  \overline{T^{k}_{\ell r}} + T^{\ell}_{ir}  \overline{T^{k}_{jr}} +     T^{j}_{kr}  \overline{T^{i}_{\ell r}} +   T^{\ell}_{kr}  \overline{T^{i}_{jr}} \big)   \label{eq:Rs2}
\end{equation*}
for any $1\leq i,j,k,\ell \leq n$. Letting $\ell =n$ in the above identity, the second and fourth terms on the right hand side vanish, while the diagonal property of $T^{\ast }_{\ast n}$ gives us
\begin{equation*}
0 \ = \ R^s_{i\overline{j}k\overline{n}} \ = \    (\overline{a}_k - \overline{a}_i)  T^{j}_{ik}
\end{equation*}
for any $1\leq i,j,k\leq n$. Let $k=n$ and $i=j$, we get $|a_i|^2=0$, hence $a_i=0$ for each $i$. But this will violate the property $a_1 + \cdots + a_{n-1}=\lambda >0$, hence the metric $g$ must be K\"ahler to begin with, and we have completed the proof of Theorem 1. \end{proof}

As a final remark, we notice that in Theorem 1, one actually does not need to assume that $M^n$ is compact,  due to the strong properties of non-K\"ahler SKL manifolds. But for general pluriclosed manifold with constant holomorphic sectional curvature, we believe that the compactness assumption is necessary.

\vspace{0.5cm}

\noindent\textbf{Acknowledgments.} {The second named author would like to thank mathematicians  Haojie Chen, Xiaolan Nie, Kai Tang, Bo Yang, Xiaokui Yang, and Quanting Zhao for their interests and/or helpful discussions.}

\vspace{0.5cm}


\end{document}